 \numberwithin{equation}{section}
\newtheorem{theorem}{Theorem}
\newtheorem{lemma}[theorem]{Lemma}
\newtheorem{proposition}[theorem]{Proposition}
\theoremstyle{definition}
\newcommand{\CC}{\ensuremath{\mathbb{C}}} 
\newcommand{\NN}{\ensuremath{\mathbb{N}}}
\newcommand{\End}{\operatorname{End}}
\newcommand{\git}{\ensuremath{\operatorname{\!/\!\!/\!}}}
\newcommand{\GL}{\operatorname{GL}}
\newcommand{\head}{\operatorname{h}}
\newcommand{\Hilb}{\operatorname{Hilb}}
\newcommand{\Hom}{\operatorname{Hom}}
\newcommand{\Quot}{\operatorname{Quot}}
\newcommand{\Rep}{\operatorname{Rep}}
\newcommand{\SL}{\operatorname{SL}}
\newcommand{\Sym}{\operatorname{Sym}}
\newcommand{\tail}{\operatorname{t}}
\newcommand{\tr}{\operatorname{tr}}
\newcommand{\xpij}[2]{\operatorname{x}_{#1,#2}}
\newcommand{\QuotI}[1][ ]{\Quot_I^{#1}}
\newcommand{\QuotInI}{\QuotI[n_I]}
\title{Orbifold Quot schemes via  the Le Bruyn--Procesi theorem}
\author{Alastair Craw} 
\address{Department of Mathematical Sciences, 
University of Bath, 
Claverton Down, 
Bath BA2 7AY, 
UK.}
\email{a.craw@bath.ac.uk}
\begin{document}
 
 \begin{abstract}
 This note provides a short proof of the fact that the reduced scheme underlying each orbifold Quot scheme associated to a finite subgroup of $\SL(2,\mathbb{C})$ is isomorphic to a Nakajima quiver variety. Our approach uses recent work of the author with Yamagishi, allowing us to bypass the combinatorial arguments and the use of recollement from \cite{CGGS2}. 
 \end{abstract}

 \maketitle

  \section{Introduction}
 Let $\Gamma\subset \SL(2,\CC)$ be a nontrivial, finite subgroup. Orbifold Quot schemes for the Kleinian orbifold $[\mathbb{C}^2/\Gamma]$ were introduced in \cite{CGGS2}, generalising Hilbert schemes of points on the ADE singularity $\CC^2/\Gamma$. Here, we exploit a new approach to variation of GIT quotient for quiver moduli spaces from recent joint work~\cite{CY23} to present a simple proof of \cite[Theorem~1.1(3)]{CGGS2} which establishes that the reduced scheme underlying each orbifold Quot scheme is isomorphic to a Nakajima quiver variety $\mathfrak{M}_{\theta}(1,v)$ for the framed preprojective algebra of $\Gamma$. 
 
  To recall the orbifold Quot schemes, list the irreducible representations of $\Gamma$ as $\rho_0, \rho_1,\dots, \rho_r$, where $\rho_0$ is the trivial representation.
  The group $\Gamma$ acts dually on the coordinate ring $R$ of $\mathbb{C}^2$, giving the decomposition $R\cong\bigoplus_{0\leq i\leq r} R_i\otimes_{\CC} \rho_i$ as a sum of indecomposable $R^\Gamma$-modules, where $R_i=\Hom_\Gamma(\rho_i,R)$ for $0\leq i\leq r$. For a non-empty subset $I\subseteq \{0,1,\dots, r\}$, consider the submodule 
  \[
  R_I:=\bigoplus_{i\in I} R_i
  \]
  of $R$. 
  The noncommutative algebra $\End(R_I)$ can be constructed as the quotient of the path algebra of a quiver with vertex set $I$ by a two-sided ideal of relations \cite[(3.5)]{CGGS1}. For each $i\in I$, if we write $e_i:= \text{id}_{R_i}\in \End(R_I)$ for the vertex idempotent determined by the identity map, then the dimension vector of a finite-dimensional $\End(R_I)$-module $Z$ is $\dim Z:=(\dim e_i Z)_{i\in I}\in \NN^I$. As a set, the \emph{orbifold Quot scheme} for $\Gamma$ associated to $I$ and any vector $n_I=(n_i)_{i\in I}\in \mathbb{N}^{I}$ is     
   \[
   \Quot_I^{n_I}\big([\mathbb{C}^2/\Gamma]\big):= \big\{\End(R_I) \text{-epimorphisms } R_I\twoheadrightarrow Z 
   \mathrel{\big|} 
   \dim Z = n_I\big\}.
   \]
   A key result from \cite[Proposition~3.2]{CGGS2} shows that $\Quot_I^{n_I}\big([\mathbb{C}^2/\Gamma]\big)$ can be constructed as a scheme of finite type over $\CC$ that represents a functor, denoted $\mathcal{Q}^{n_I}_{\End(R_I)}(R_I)$. 
   
   Here we prove the following result that streamlines slightly the statement of \cite[Theorem~1.1(3)]{CGGS2}:
  
  \begin{theorem}
\label{thm:main}
 Let $I\subseteq \{0,1,\dots, r\}$ be a non-empty subset and let $n_I=(n_i)_{i\in I}\in \mathbb{N}^{I}$. There exists $v:=v(n_I) \in \mathbb{N}^{r+1}$ and a stability condition $\theta_I$ for the framed preprojective algebra of $\Gamma$ such that 
 \[
\QuotInI ([\mathbb{C}^2/\Gamma])_{\mathrm{red}} \cong  \mathfrak{M}_{\theta_I}(1,v).
\]
 In particular, $\QuotInI ([\mathbb{C}^2/\Gamma])$ is irreducible, and its underlying reduced scheme is normal and has symplectic singularities. Moreover, it admits at least one projective symplectic resolution.
\end{theorem}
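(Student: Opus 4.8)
The plan is to bypass the recollement and combinatorics of \cite{CGGS2} by exhibiting the orbifold Quot scheme directly as a GIT quotient and then transporting it across the Le Bruyn--Procesi isomorphism of \cite{CY23}. Write $\Pi$ for the preprojective algebra of the McKay quiver of $\Gamma$; by the $\SL(2,\CC)$ McKay correspondence one has $\End(R)\cong\Pi$, and hence, with $e_I:=\sum_{i\in I}e_i$, the quiver-with-relations presentation of \cite[(3.5)]{CGGS1} identifies $\End(R_I)\cong e_I\Pi e_I$ as the contracted algebra. The functor $\mathcal{Q}^{n_I}_{\End(R_I)}(R_I)$ parametrises cyclic $\End(R_I)$-modules, namely quotients $R_I\twoheadrightarrow Z$ with $\dim Z=n_I$; since the generator is the fixed module $R_I$, King's construction represents this functor as a GIT quotient of a representation space for $e_I\Pi e_I$ by a product of general linear groups, with the linearisation chosen so that the semistable points are exactly the cyclic (equivalently, the epimorphic) representations.

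First I would make this GIT description explicit and then apply the Le Bruyn--Procesi theorem in the form of \cite{CY23}: contraction and expansion along the idempotent $e_I$ induce an isomorphism between the GIT quotient for $e_I\Pi e_I$ and a GIT quotient for the full preprojective algebra $\Pi$, at the level of reduced schemes. Under this expansion the fixed generator $R_I=e_I\Pi e_0$ lifts to $R\cong\Pi e_0$, which furnishes the one-dimensional framing located at the vertex $\rho_0$ and so explains the framing datum $1$; the dimension vector $n_I\in\NN^I$ expands to a vector $v=v(n_I)\in\NN^{r+1}$ recording the dimensions at every McKay vertex of the lift of $Z$; and the cyclicity linearisation transports to a stability parameter $\theta_I$ for the framed preprojective algebra. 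Carrying the epimorphism condition across the equivalence then identifies the quotient with the Nakajima quiver variety $\mathfrak{M}_{\theta_I}(1,v)$, which yields the displayed isomorphism.

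The step I expect to be the main obstacle is the precise verification that the machinery of \cite{CY23} applies here without modification: one must confirm that the contraction along $e_I$ matches the two stability notions---cyclicity for $\End(R_I)$-modules against $\theta_I$-semistability for framed $\Pi$-modules---and that it computes the expanded vector $v(n_I)$ correctly, including the bookkeeping that produces $\theta_I$ and $v$ from the data $(I,n_I)$. It is exactly here that the reduced structure enters: the Le Bruyn--Procesi comparison is invariant-theoretic and therefore recovers only the underlying reduced scheme, which is why the isomorphism is asserted for $\QuotInI([\mathbb{C}^2/\Gamma])_{\mathrm{red}}$ and why any non-reducedness of the Quot scheme is invisible to this argument.

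Granting the identification $\QuotInI([\mathbb{C}^2/\Gamma])_{\mathrm{red}}\cong\mathfrak{M}_{\theta_I}(1,v)$, the remaining assertions follow from the established geometry of Nakajima quiver varieties. Irreducibility and normality of $\mathfrak{M}_{\theta_I}(1,v)$ are consequences of Crawley-Boevey's results on moduli of representations of preprojective algebras; that it carries symplectic singularities is due to Bellamy and Schedler; and the existence of at least one projective symplectic resolution follows by variation of GIT together with the Bellamy--Schedler criterion for symplectic resolutions of quiver varieties, realised concretely by moving $\theta_I$ into a generic adjacent chamber to produce a smooth symplectic quiver variety $\mathfrak{M}_{\theta'}(1,v)$ and a projective, birational, Poisson morphism $\mathfrak{M}_{\theta'}(1,v)\to\mathfrak{M}_{\theta_I}(1,v)$.
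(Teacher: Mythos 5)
Your overall strategy coincides with the paper's: realise $\QuotInI([\mathbb{C}^2/\Gamma])$ as a King-type GIT quotient of a representation space for the cornered algebra (this is exactly \cite[Proposition~4.2]{CGGS2}), then transport it across a Le Bruyn--Procesi-type isomorphism to a GIT quotient for the framed preprojective algebra, i.e.\ to a Nakajima quiver variety. However, the step you defer as ``the main obstacle'' is not a routine verification --- it is precisely the mathematical content of the paper, and as it stands your proposal has a genuine gap there. First, you never define $v(n_I)$, and your description of it as ``recording the dimensions at every McKay vertex of the lift of $Z$'' is not well defined: points of the Quot scheme do not lift to modules of one uniform dimension vector, and the whole difficulty is to choose the extension of $n_I$ to the vertices outside $I$ for which the comparison works. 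The paper takes $v_i=n_i$ for $i\in I$ and $v_k=n\dim(\rho_k)$ for $k\notin I$, where $n$ is the \emph{largest} integer with $n\dim(\rho_i)\leq n_i$ for all $i\in I$; producing this vector is part of the proof, not bookkeeping that the machinery supplies automatically.

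Second, the machinery of \cite{CY23} does \emph{not} apply without modification. The cornering isomorphism \cite[Theorem~5.3]{CY23}, and the resulting \cite[Corollary~6.7]{CY23}, treat the case where the dimension vector restricted to $I$ comes from $n\delta$; for general $n_I$ one must prove afresh that $\CC[\Rep(A_I,\alpha_I)_{\mathrm{red}}]\cong\CC[\Rep(A,\alpha)_{\mathrm{red}}]^{H_K}$, which is Proposition~\ref{prop:psired} here. Its proof requires a new ingredient beyond \cite{CY23}: the generators of $\CC[\Rep(A,\alpha)]^{H_K}$ given by traces of cycles supported away from $\{\infty\}\cup I$ must be shown to be redundant, and this is done (Lemma~\ref{lem:RepalphatoSym}) by reducing invariants at dimension $v$ to invariants at dimension $n\delta$ via Crawley-Boevey's decomposition theorem \cite[Lemma~2.3]{CB02}. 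This also corrects your explanation of where reducedness enters: it is not that an invariant-theoretic comparison can only ever see the reduced scheme --- for $v$ proportional to $\delta$ the comparison in \cite{CY23} identifies the Quot scheme itself, which turns out to be reduced --- but rather that Crawley-Boevey's result applies to varieties, forcing passage to $\Rep(\Pi_\Gamma,v)_{\mathrm{red}}$ and $\Rep(A,\alpha)_{\mathrm{red}}$ throughout. (A smaller slip: $\End(R)$ is Morita equivalent to, not isomorphic to, the preprojective algebra $\Pi_\Gamma$, since $R\cong\bigoplus_i R_i^{\oplus\dim\rho_i}$; the identification you want is $\End(\bigoplus_i R_i)\cong\Pi_\Gamma$.) Your final paragraph deducing irreducibility, normality, symplectic singularities and the existence of a projective symplectic resolution from standard properties of $\mathfrak{M}_{\theta_I}(1,v)$ is fine.
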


Our approach bypasses the combinatorial arguments and the use of recollement in the original proof, so we also bypass the gap in the proof of \cite[Proposition~6.1]{CGGS2} that was pointed out to us by Yehao Zhou (see \cite[Remarks~6.8(2)]{CY23}). An alternative approach to this can be found in \cite{Bertsch24}. 

In the special case when there exists $n\in \mathbb{N}$ satisfying $n_i=n\dim \rho_i$ for all $i\in I$, it was shown in \cite[Corollary~6.7]{CY23} that the scheme $\QuotInI ([\mathbb{C}^2/\Gamma])$ is actually reduced, so this orbifold Quot scheme with its natural scheme structure is isomorphic to a Nakajima quiver variety. Here, in the proof of the more general statement from Theorem~\ref{thm:main}, we apply a result of Crawley-Boevey that forces us to work with the underlying reduced scheme structure. 

\smallskip

\noindent \textbf{Notation:} For any scheme $X$, we write $X_{\textrm{red}}$ for the underlying reduced subscheme.

 \section{Proof of Theorem~\ref{thm:main}}
 Let $n\in \mathbb{N}$ be the largest non-negative integer with $n\dim(\rho_i) \leq n_i$ for all $i\in I$. Define the vector $v=(v_i)\in \mathbb{N}^{r+1}$ by setting 
 \[
 v_i=\left\{
 \begin{array}{cr} 
 n_i & \text{for }i\in I, \\
 n\dim(\rho_i) & \text{otherwise.}
 \end{array}\right.
 \]
 The McKay quiver $Q_\Gamma$ is the doubled quiver of an extended Dynkin graph of type ADE with vertex set $\{0,1,\dots, r\}$, where $0$ is the extending node. The preprojective algebra $\Pi_\Gamma$ is the quotient of the path algebra of $Q_\Gamma$ by the two-sided ideal of preprojective relations. 
 
 The symplectic vector space $\Rep(Q_\Gamma,v)$ of representations of the quiver $Q_\Gamma$ with dimension vector $v$ admits a Hamiltonian action by  \[
 G(v):=\prod_{0\leq i\leq r} \GL(v_i).
 \]
 Write $\Rep(\Pi_\Gamma,v)$ for the $G(v)$-invariant, affine subscheme of $\Rep(Q_\Gamma,v)$ parametrising  representations of $Q_\Gamma$ that satisfy the preprojective relations; this subscheme is often denoted $\mu_v^{-1}(0)$, where $\mu_v$ is the moment map. 
 
 When $n_i=n\dim(\rho_i)$ for all $0\leq i\leq r$,  then $v=n\delta$ where $\delta\in\mathbb{N}^{r+1}$ is the minimal imaginary root in the root system of type ADE associated to $\Gamma$. In this case, we write $\mu_{n\delta}^{-1}(0)$ and $G(n\delta)$ for the corresponding representation scheme and reductive group respectively.
 
 \begin{lemma}
 \label{lem:RepalphatoSym}
 For any $0\leq i\leq r$, the $\CC$-algebra $\CC[\Rep(\Pi_\Gamma,v)_{\mathrm{red}}]^{G(v)}\cong \CC[\Sym^n(\mathbb{C}^2/\Gamma)]$ is generated by the trace functions $\tr_p$ associated to cycles in $Q_\Gamma$ with head and tail at vertex $i$.
 \end{lemma}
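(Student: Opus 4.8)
The plan is to deduce the statement from three inputs: the Le Bruyn--Procesi theorem, the identification of $\Pi_\Gamma$ with the skew group algebra $\CC[x,y]\rtimes\Gamma$ supplied by the McKay correspondence, and a decomposition theorem of Crawley-Boevey for the affine quotient. Since $G(v)$ is reductive, the functor of $G(v)$-invariants is exact, so $\CC[\Rep(\Pi_\Gamma,v)_{\mathrm{red}}]^{G(v)}$ is the reduction of $\CC[\Rep(\Pi_\Gamma,v)]^{G(v)}$; in other words it is the coordinate ring of the reduced affine quotient $\big(\mu_v^{-1}(0)\git G(v)\big)_{\mathrm{red}}$. By the Le Bruyn--Procesi theorem this ring is generated by the trace functions $\tr_p$ as $p$ runs over all oriented cycles in $Q_\Gamma$, at every vertex.

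First I would identify the reduced affine quotient. Its closed points are the semisimple $\Pi_\Gamma$-modules of dimension vector $v$. Because $n$ is chosen maximal with $n\delta\leq v$, the residual vector $w:=v-n\delta$ is non-negative and satisfies $w\not\geq\delta$; the only imaginary summand available is therefore $\delta$, appearing at most $n$ times. Crawley-Boevey's decomposition of Marsden--Weinstein reductions then gives a reduced isomorphism $\big(\mu_v^{-1}(0)\git G(v)\big)_{\mathrm{red}}\cong \Sym^n(\CC^2/\Gamma)$, under which a point is recorded by the multiset $\{p_1,\dots,p_n\}\subset \CC^2/\Gamma$ of central characters of its $\delta$-dimensional simple summands, the remaining summands being rigid and nilpotent, hence supported at the cone point $0$. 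This is the step that forces the reduced structure, since the scheme $\mu_v^{-1}(0)$ itself may be non-reduced once $w\neq 0$.

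It remains to see that cycles based at the single vertex $i$ already generate. Cyclic invariance of the trace only relocates a basepoint to a vertex lying on the cycle, so it cannot by itself reach cycles that avoid $i$; this is the main obstacle, and I would circumvent it globally rather than cycle-by-cycle. Under the McKay correspondence $\Pi_\Gamma\cong\CC[x,y]\rtimes\Gamma$ the centre is $Z(\Pi_\Gamma)\cong\CC[x,y]^\Gamma=\CC[\CC^2/\Gamma]$, and each central element $z$ restricts to the cycle $e_i z e_i$ at vertex $i$. Evaluating its trace on a semisimple module $M$ as above, the simple $\delta$-summand at $p_k$ contributes $z(p_k)\dim(\rho_i)$ while every rigid summand contributes through $z(0)$, giving the uniform formula $\tr_{z}(M)=\dim(\rho_i)\sum_{k=1}^n z(p_k)+ w_i\, z(0)$. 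Thus, modulo the nonzero scalar $\dim\rho_i$ and an additive constant, $\tr_z$ is the power sum $\sigma_1(z)=\sum_k z(p_k)$.

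Finally, since we are in characteristic $0$, the power sums $\{\sigma_1(z): z\in\CC[\CC^2/\Gamma]\}$ generate the whole symmetric algebra $\CC[\Sym^n(\CC^2/\Gamma)]=(\CC[\CC^2/\Gamma]^{\otimes n})^{S_n}$: the identity $\sigma_1(z)\sigma_1(z')=\sigma_1(zz')+(\text{symmetrisation of }z\otimes z'\text{ over distinct factors})$ lets one generate all symmetrised tensors inductively. Hence the trace functions of cycles at vertex $i$ already exhaust $\CC[\Sym^n(\CC^2/\Gamma)]$, which by the second paragraph is the entire invariant ring. This simultaneously yields the claimed isomorphism and the generation statement.
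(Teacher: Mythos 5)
Your argument is correct, but it follows a genuinely different route from the paper's. The paper's proof is a two-step reduction: it first cites \cite[Proposition~4.1]{CY23} for the case $v=n\delta$, a result which already contains \emph{both} the identification $\CC[\mu_{n\delta}^{-1}(0)]^{G(n\delta)}\cong\CC[\Sym^n(\mathbb{C}^2/\Gamma)]$ \emph{and} the single-vertex generation statement, and then transfers this to general $v$ by applying the vertex-removal result \cite[Lemma~2.3]{CB02} repeatedly, which shows that the map $\CC[\Rep(\Pi_\Gamma,v)_{\mathrm{red}}]^{G(v)}\to\CC[\mu_{n\delta}^{-1}(0)]^{G(n\delta)}$ sending each trace function to the corresponding trace function is an isomorphism. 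You never pass through $n\delta$: you identify the reduced affine quotient at dimension vector $v$ itself with $\Sym^n(\mathbb{C}^2/\Gamma)$ via Crawley-Boevey's decomposition theorem (semisimple modules being $n$ points' worth of $\delta$-dimensional simples, with points at the origin interpreted as $\bigoplus_j S_j^{\dim\rho_j}$, plus a rigid nilpotent remainder of dimension $w=v-n\delta$), and you then re-prove the single-vertex generation directly: each central element $z\in Z(\Pi_\Gamma)\cong\CC[\mathbb{C}^2/\Gamma]$ yields a class $e_ize_i$ that is a linear combination of cycles at $i$, its trace equals $\dim(\rho_i)\,\sigma_1(z)+w_iz(0)$ on semisimple modules, and the power sums $\sigma_1(z)$ generate $\bigl(\CC[\mathbb{C}^2/\Gamma]^{\otimes n}\bigr)^{S_n}$ in characteristic zero. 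In effect you unpack the content of \cite[Proposition~4.1]{CY23} rather than citing it, which makes the argument more self-contained; the price is invoking the full decomposition theorem of \cite{CB02} rather than its single vertex-removal lemma, together with the classical polarization fact about power sums. Note also that your opening appeal to the Le Bruyn--Procesi theorem is logically superfluous in your own argument, since the power-sum step alone shows the traces at vertex $i$ exhaust the invariant ring. One small inaccuracy that does not affect the proof: $\Pi_\Gamma$ is Morita equivalent to, not isomorphic to, $\CC[x,y]\rtimes\Gamma$ (they are isomorphic only when all $\rho_i$ are one-dimensional, i.e.\ when $\Gamma$ is cyclic); since Morita equivalence preserves centres, the identification $Z(\Pi_\Gamma)\cong\CC[x,y]^\Gamma$ that you actually use survives.
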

 \begin{proof}
 Apply \cite[Proposition~4.1]{CY23} to see that $\CC[\mu_{n\delta}^{-1}(0)]^{G(n\delta)} \cong \CC[\Sym^n(\mathbb{C}^2/\Gamma)]$ is generated by the trace functions $\tr_p$ associated to cycles in $Q_\Gamma$ with head and tail at $i$. More generally, by taking the reduced scheme structure on $\Rep(\Pi_\Gamma,v)$, we may apply  \cite[Lemma~2.3]{CB02} repeatedly to see that the $\CC$-algebra homomorphism 
 \[
 \CC[\Rep(\Pi_\Gamma,v)_{\mathrm{red}}]^{G(v)}\rightarrow \CC[\mu_{n\delta}^{-1}(0)]^{G(n\delta)}
 \]
 given by sending each trace function $\tr_p$ for dimension $v$ to the corresponding trace function $\tr_p$ for dimension $n\delta$ is an isomorphism.
 \end{proof}
 
 Let $\Pi$ denote the preprojective algebra of the framed McKay quiver $Q$ obtained from the McKay quiver $Q_\Gamma$ by adding a framing vertex $\infty$ and an arrow in each direction between $\infty$ and $0$ (see \cite{CGGS1}).  Regard $\alpha:=(1,v)\in \mathbb{N}\oplus \mathbb{N}^{r+1}$ as a dimension vector for both $\Pi$ and $A:= \Pi/(b^*)$, where $b^*$ is the unique arrow in $Q$ with head at $\infty$. 
 
 Following \cite[Proposition~3.3]{CGGS1}, there is a quiver $Q^*:=Q_{\{0,1,\dots,r\}}^*$ and an epimorphism $\beta\colon \mathbb{C}Q^*\to A$ of $\CC$-algebras. The space $\Rep(Q^*,\alpha)$ of representations of $Q^*$ with dimension vector $\alpha$ contains the closed subscheme $\Rep(A,\alpha)$ of representations satisfying the relations $\ker(\beta)$. The group 
 \[
 G(\alpha):=\GL(1)\times \prod_{0\leq i\leq r} \GL(v_i)
 \]
  acts on $\Rep(A,\alpha)$ by change of basis for representations. Define $K:= \{0,1,\dots, r\}\setminus I$, and regard $I$ and $K$ as subsets of the vertex set $Q_0^*=\{\infty,0,1,\dots, r\}$, so 
  \[
  Q_0^*=\{\infty\}\sqcup I\sqcup K.
  \]
  Note that the subgroup $H_K:= \prod_{k\in K} \GL(v_k)$ of $G(\alpha)$ acts on $\Rep(A,\alpha)$.
 
 Next, regard $\alpha_I:=(1,n_I)\in \mathbb{N}\times \mathbb{N}^I$ as a dimension vector for the subalgebra $A_I\subseteq A$ spanned by classes of paths with head and tail in $I\subseteq Q_0$.  Again, \cite[Proposition~3.3]{CGGS1} gives a quiver $Q_I^*$ with vertex set $\{\infty\}\cup I$ and a $\CC$-algebra epimorphism $\beta_I\colon \mathbb{C}Q_I^*\to A_I$, and we write $\Rep(A_I,\alpha_I)$ for the affine scheme of representations of $Q_I^*$ that have dimension $\alpha_I$ which satisfy the relations $\ker(\beta_I)$. The group 
 \[
 G(\alpha_I):= \GL(1)\times \prod_{i\in I} \GL(n_i)
 \]
 acts on $\Rep(A_I,\alpha_I)$. Taking the underlying reduced scheme structure defines a (possibly reducible) variety with coordinate ring $\CC[\Rep(A_I,\alpha_I)_{
\mathrm{red}}]$.
   
 \begin{proposition}
 \label{prop:psired}
 There is a $\CC$-algebra isomorphism $\CC[\Rep(A_I,\alpha_I)_{
\mathrm{red}}]\cong \CC[\Rep(A,\alpha)_{\mathrm{red}}]^{H_K}$.
 \end{proposition}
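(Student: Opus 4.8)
The plan is to produce an explicit comparison homomorphism by \emph{condensing} representations onto the vertices $\{\infty\}\cup I$, and then to show it is an isomorphism of reduced coordinate rings by combining the Le Bruyn--Procesi theorem with Crawley-Boevey's Lemma~2.3, in the same spirit as Lemma~\ref{lem:RepalphatoSym}. Writing $e=e_\infty+\sum_{i\in I}e_i$ for the idempotent cutting out these vertices, one identifies $A_I$ with the condensed algebra $eAe$. First I would define a morphism $\Rep(A,\alpha)\to\Rep(A_I,\alpha_I)$: under $\beta_I$ each arrow of $Q_I^*$ is the class of a path in $Q^*$ whose interior vertices all lie in $K$, and I send a representation of $A$ to the representation of $A_I$ whose structure maps are the corresponding matrix products. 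Since such a path begins and ends in $\{\infty\}\cup I$ while $H_K$ acts only at the interior $K$-vertices, the intermediate change-of-basis matrices telescope and cancel, so this morphism is $H_K$-invariant. After passing to reduced structures it induces the $\CC$-algebra map
\[
\phi\colon\CC[\Rep(A_I,\alpha_I)_{\mathrm{red}}]\longrightarrow\CC[\Rep(A,\alpha)_{\mathrm{red}}]^{H_K},
\]
which I claim is the asserted isomorphism.

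For surjectivity I would apply the Le Bruyn--Procesi theorem to the subgroup $H_K=\prod_{k\in K}\GL(v_k)$: the ring $\CC[\Rep(A,\alpha)_{\mathrm{red}}]^{H_K}$ is generated by the matrix entries of paths of $Q^*$ running between vertices of $\{\infty\}\cup I$, together with the trace functions $\tr_p$ along oriented cycles in $Q^*$. The path entries are by construction the arrow coordinates of $Q_I^*$, hence lie in the image of $\phi$, and the trace of any cycle based at a vertex of $\{\infty\}\cup I$ is a sum of diagonal path entries and so lies there too. The only remaining generators are traces of cycles supported entirely within $K$, and these are the crux of the argument.

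To dispose of the $K$-supported traces I would use the single-vertex generation in the proof of Lemma~\ref{lem:RepalphatoSym}: by \cite[Proposition~4.1]{CY23} the trace functions are already generated by cycles based at one chosen vertex, which I take in $I$, and by \cite[Lemma~2.3]{CB02} this persists after the change of dimension vector from $v$ to $n\delta$. Hence on the reduced scheme every $K$-cycle trace becomes a polynomial in traces of cycles based at an $I$-vertex, each of which is already in the image of $\phi$; this yields surjectivity. For injectivity I would show the condensation morphism is dominant, so that a function on $\Rep(A_I,\alpha_I)_{\mathrm{red}}$ vanishing on all condensations must be zero; concretely, the defining relations $\ker(\beta_I)$ of $A_I$ are the $e$-condensations of $\ker(\beta)$, and Crawley-Boevey's lemma guarantees that no relations are created or lost between the two reduced structures.

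I expect the genuine obstacle to be exactly the cycles supported in $K$: a priori their traces are $H_K$-invariants with no counterpart in $A_I$, and it is only the interplay of the preprojective relations, the chosen value $v_k=n\dim(\rho_k)$, and the passage to reduced schemes---packaged through \cite[Lemma~2.3]{CB02}---that renders them redundant. This is also precisely why the statement must be phrased for the underlying reduced schemes and not for the representation schemes themselves.
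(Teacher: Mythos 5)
Your comparison map $\phi$ is the same map as the paper's $\psi_{\mathrm{red}}$ (the pullback of the restriction/condensation morphism $\Rep(A,\alpha)\to\Rep(A_I,\alpha_I)$, using that $Q_I^*$ sits inside $Q^*$), and your surjectivity argument is essentially the paper's: invoke the Le Bruyn--Procesi-type result \cite[Proposition~3.2]{CY23} for the subgroup $H_K$, then kill the $K$-supported cycle traces exactly as the paper does, via Lemma~\ref{lem:RepalphatoSym} (that is, \cite[Proposition~4.1]{CY23} combined with \cite[Lemma~2.3]{CB02}) and the identity $\tr_q=\sum_{j} \xpij{q}{jj}$ for cycles $q$ based at a vertex of $I$. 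One gloss needs repair: the matrix entries of an arbitrary path of $Q^*$ running between vertices of $\{\infty\}\cup I$ (such a path may wander through $K$) are \emph{not} ``by construction the arrow coordinates of $Q_I^*$''; you must argue that modulo $\ker(\beta)$ every such path is a linear combination of paths traversing only arrows with head and tail in $\{\infty\}\cup I$. The paper extracts this from the construction in \cite{CGGS1}; in your language it follows from surjectivity of $\beta_I$ onto $A_I=eAe$ together with compatibility of $\beta$ and $\beta_I$, but it has to be said.

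The genuine gap is injectivity. Dominance of the condensation morphism is precisely what needs to be proved, and your justification does not prove it. The observation that $\ker(\beta_I)$ consists of $e$-condensations of $\ker(\beta)$ is a statement about relations; dominance is a lifting statement about modules: a generic $A_I$-module of dimension vector $\alpha_I$ must arise as $eV$ for an $A$-module $V$ of the \emph{specific} dimension vector $\alpha=(1,v)$. The natural candidate $V=Ae\otimes_{eAe}W$ does satisfy $e(Ae\otimes_{eAe}W)\cong W$, but it gives no control whatsoever over the dimensions at the vertices of $K$, which are required to equal $v_k=n\dim(\rho_k)$; this is exactly where the careful choice of $v$ enters, and where dominance can fail for other choices of dimension vector at $K$. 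Nor does \cite[Lemma~2.3]{CB02} help here: it compares invariant rings of $\mu^{-1}(0)$ for different dimension vectors of the unframed preprojective algebra $\Pi_\Gamma$, and says nothing about the image of the condensation morphism. The paper closes this step by citing the explicit one-sided inverse to $\psi$ constructed in the proof of \cite[Theorem~5.3]{CY23} --- geometrically, a section of the condensation map defined on all of $\Rep(A_I,\alpha_I)$, which in particular yields the dominance you assert. The existence of that section is real mathematical content that your sketch neither constructs nor cites, so as written the injectivity half of your argument is incomplete.
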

 \begin{proof}
 The proof builds on that of \cite[Theorem~5.3]{CY23}. For a nontrivial path $p=a_\ell\cdots a_1$ in $Q^*$ with head and tail at $\head(p)$ and $\tail(p)$ respectively, and for indices $1\leq i\leq \alpha_{\head(p)}$ and $1\leq j\leq \alpha_{\tail(p)}$, let $\xpij{p}{ij}\in\CC[\Rep(Q^*,\alpha)]$ denote the function that sends $(B_a)\in \Rep(Q^*,\alpha)$ to the $(i,j)$-entry of the matrix $B_{a_\ell}\cdots B_{a_1}$. The construction from  
 \cite[Proposition~3.3]{CGGS1} shows that $Q_I^*$ is a subquiver of $Q^*$.  The image of the resulting inclusion of polynomial rings 
 \[
 \overline{\psi}\colon\CC[\Rep(Q_I^*,\alpha_I)]\hookrightarrow \CC[\Rep(Q^*,\alpha)]
 \]
 is the polynomial subalgebra $R\subset \CC[\Rep(Q^*,\alpha)]^{H_K}$ generated by the functions $\xpij{a}{ij}$, where $a\in Q^*_1$ has head and tail in $\{\infty\}\cup I$, for $1\leq i\leq \alpha_{\head(a)}$ and $1\leq j\leq \alpha_{\tail(a)}$.  
 
  As in the proof of \cite[Theorem~5.3]{CY23}, the map $\overline{\psi}$ induces a $\CC$-algebra homomorphism
 \[
\psi\colon \CC[\Rep(A_I,\alpha_I)]\longrightarrow \CC[\Rep(A,\alpha)]^{H_K}
\]
whose image is generated by the classes in $\CC[\Rep(A,\alpha)]^{H_K}$ of the functions $\xpij{a}{ij}$ for arrows $a$ in $Q^*$ that have head and tail in $\{\infty\}\cup I$, for $1\leq i\leq \alpha_{\head(a)}$ and $1\leq j\leq \alpha_{\tail(a)}$. The construction of the right-inverse to $\psi$ from the proof of \cite[Theorem~5.3]{CY23} shows that $\psi$ is injective. Taking the quotient by the nilradical $\sqrt{0}$ gives 
\[
\CC[\Rep(A,\alpha)]^{H_K}/\sqrt{0}\cong \CC[(\Rep(A,\alpha)\git H_K)_{\mathrm{red}}]\cong 
 \CC[\Rep(A,\alpha)_{\mathrm{red}}]^{H_K},\]
 and we see that $\psi$ induces a $\CC$-algebra homomorphism 
 \[
  \psi_{\mathrm{red}}\colon \CC[\Rep(A_I,\alpha_I)]_{\mathrm{red}}\longrightarrow \CC[\Rep(A,\alpha)_{\mathrm{red}}]^{H_K}.
  \]
 Injectivity of $\psi$ implies that $\psi_{\mathrm{red}}$ is injective. It remains to show that $\psi_{\mathrm{red}}$ is surjective. 

Apply \cite[Proposition~3.2]{CY23} to see that $\CC[\Rep(A,\alpha)]^{H_K}$ is generated by the trace functions $\tr_p$ of cycles $p$ in $Q^*$ traversing only arrows with head and tail in $K$, together with the contraction functions $\xpij{p}{ij}$ for paths $p$ with head and tail in $\{\infty\}\cup I$ for indices  $1\leq i\leq \alpha_{\head(p)}$ and $1\leq j\leq \alpha_{\tail(p)}$. We claim that the trace function generators are redundant. Indeed, as in the proof of \cite[Proposition~4.5]{CY23}, the class of each nontrivial cycle in $Q^*$ may be regarded as the class in $A$ of a cycle that does not touch vertex $\infty$, so each trace function may be regarded as an element of $\CC[\Rep(\Pi_\Gamma,v)]^{G(v)}$. Again, taking the quotient by the nilradical allows us to 
work in the algebra 
\[
\CC[\Rep(\Pi_\Gamma,v)]^{G(v)}/\sqrt{0}\cong \CC[(\Rep(\Pi_\Gamma,v)\git G(v))_{\mathrm{red}}]\cong \CC[\Rep(\Pi_\Gamma,v)_{\mathrm{red}}]^{G(v)}.
\]
Lemma~\ref{lem:RepalphatoSym} shows that this algebra is generated by trace functions associated to cycles $q$ with head and tail at a vertex $i\in I$. Every such trace function equals $\sum_{1\leq j\leq \alpha_i} \xpij{q}{jj}$, i.e., the trace function generators can be written in terms of the contraction function generators. This proves the claim. 

The construction of the quiver $Q^*= Q_{\{0,1,\dots,r\}}^*$ from \cite[Proposition~3.2]{CGGS1} shows that every path in $Q^*$ with head and tail in $\{\infty\}\cup I$ is equivalent modulo $\ker(\beta)$ to a linear combination of paths that traverse only arrows with head and tail in $\{\infty\}\cup I$. Therefore $\CC[\Rep(A,\alpha)]^{H_K}$ is generated by the functions $\xpij{a}{ij}$ for arrows $a$ in $Q^*$ with head and tail in $\{\infty\}\cup I$ for indices  $1\leq i\leq \alpha_{\head(p)}$ and $1\leq j\leq \alpha_{\tail(p)}$. Therefore $\psi_{\mathrm{red}}$ is surjective and hence an isomorphism as required.
\end{proof}
  
  \begin{proof}[Proof of Theorem~\ref{thm:main}]
  For the character $\eta_I\in G(\alpha_I)^\vee$ from \cite[(4.2)]{CGGS2}, the orbifold Quot scheme from Theorem~\ref{thm:main} satisfies $\QuotInI ([\mathbb{C}^2/\Gamma]) \cong \Rep(A_I,\alpha_I)\git_{\eta_I} G(\alpha_I)$ by \cite[Proposition~4.2]{CGGS2}, giving 
 \[
 \QuotInI ([\mathbb{C}^2/\Gamma])_{\mathrm{red}} \cong \Rep(A_I,\alpha_I)_{\mathrm{red}}\git_{\eta_I} G(\alpha_I).
 \]
The quotient map $G(\alpha)\to G(\alpha)/H_K\cong G(\alpha_I)$ induces the inclusion of character groups that sends $\eta_I\in G(\alpha_I)^\vee$ to the character $\theta_I\in G(\alpha)^\vee$ from \cite[(5.1)]{CGGS2}. The construction from \cite[Section~5.2]{CY23} now shows that the isomorphism $\psi_{\mathrm{red}}$ of $\CC$-algebras induces an isomorphism
\[
\Rep(A_I,\alpha_I)_{\mathrm{red}}\git_{\eta_I} G(\alpha_I)\cong \Rep(A,\alpha)_{\mathrm{red}}\git_{\theta_I} G(\alpha)
 \]
of schemes over $\Rep(A,\alpha)_{\mathrm{red}}\git_0 G(v)\cong\Sym^n(\mathbb{C}^2/\Gamma)$. For the vector $\alpha=(1,v)$, the isomorphism \[
\Hilb^v([\mathbb{C}^2/\Gamma])\cong \mathfrak{M}_{\theta}(\alpha)
\]
from \cite[Proposition~5.2]{CGGS2} allows us to apply the proof of \cite[Lemma~3.1]{CGGS1}, using $v$ in place of $n\delta$ throughout, to conclude that the (reduced by assumption) Nakajima quiver variety $\mathfrak{M}_{\theta_I}(\alpha)$ may be regarded as a (reduced) moduli space of $A$-modules, giving 
\[
\Rep(A,\alpha)_{\mathrm{red}}\git_{\theta_I} G(\alpha)\cong \mathfrak{M}_{\theta_I}(\alpha).
\]
Combining these isomorphisms gives $ \QuotInI ([\mathbb{C}^2/\Gamma])_{\mathrm{red}} \cong\mathfrak{M}_{\theta_I}(\alpha)$ as required.
\end{proof}

 \small{

}

\end{document}